\numberwithin{equation}{section}
\newtheorem{thm}{Theorem}[section]
\newtheorem{lem}[thm]{Lemma}
\theoremstyle{definition}
\newtheorem{rem}[thm]{Remark}
\numberwithin{equation}{section}
\theoremstyle{definition}
\newtheorem*{ack}{Acknowledgements}
\newcommand{\C}{\mathbb{C}}
\newcommand{\N}{\mathbb{N}}
\newcommand{\Z}{\mathbb{Z}}
\newcommand{\lab}{\label}
\newcommand{\eps}{\varepsilon}
\newcommand{\ph}{\varphi}
\newcommand{\al}{\alpha}
\newcommand{\om}{\omega}
\newcommand{\la}{\lambda}
\newcommand{\si}{\sigma}
\newcommand{\Gm}{\Gamma}
\newcommand{\re}{\textup{Re}}
\newcommand{\Ll}{\mathcal{L}}
\newcommand{\G}{\mathcal{G}}
\begin{document}

\title[The Mordell-Tornheim multiple Dirichlet series]{A relation between the Mordell-Tornheim multiple Dirichlet series and the confluent hypergeometric function}

\author[Y. Toma]{Yuichiro Toma}
\address{Graduate School of Mathematics, Nagoya University, Chikusa-ku, Nagoya 464-8602, Japan.}

\email{m20034y@math.nagoya-u.ac.jp}  

\subjclass[2020]{11M32, 11M06}
\keywords{multiple Dirichlet series, confluent hypergeometric function, functional equation}

\begin{abstract} We investigate the expressions of the Mordell-Tornheim multiple Dirichlet series in terms of the confluent hypergeometric function. We prove them by applying the Mellin-Barnes integral formula. Moreover, in the double case, our results include the functional equations for two kinds of double $L$-functions shown by Choie and Mastumoto, and by Komori, Matsumoto and Tsumura, respectively. 
\end{abstract}

\maketitle
\section{Introduction}
Let $s = \si + it$ be a complex variable. In this paper we introduce the following Mordell-Tornheim type of $r$-fold Dirichlet series
\begin{equation}
\lab{formula:MTML}
\Ll_{MT,r} (s_1, \dots,s_r,s_{r+1}; a_1, \dots, a_r) = \sum_{m_1,\dots, m_r\geq 1} \frac{a_1(m_1) \dots a_r(m_r)}{m_1^{s_1}\dots m_r^{s_r} (m_1+\dots+m_r)^{s_{r+1}}},
\end{equation}
where the sequence of complex numbers $\{ a_k (n) \}_{n \geq 1}$ satisfies
\begin{enumerate}
\item\lab{axiom1} $a_k(n) \ll n^{\al_k+\eps}$ ($1 \leq k \leq r$) with a certain constant $\al_k \geq 0$, where $\eps$ is an arbitrary small positive number,
\item\lab{axiom2} the Dirichlet series 
\begin{equation*}
\Ll_k (s) = \Ll (s,a_k) = \sum_{n=1}^\infty \frac{a_k (n)}{n^s} \quad (1 \leq k \leq r)
\end{equation*}
can be continued meromorphically to the whole $\C$-plane, and holomorphic except for finitely many possible poles, 
\item\lab{axiom3} for $2 \leq k \leq r$, the order estimate $\Ll_k (\si + it) = O(\lvert t \rvert^A)$ holds as $\lvert t \rvert \to \infty$ in any fixed strip $\si_{1_k} \leq  \si \leq \si_{2_k}$, where $A$ is a non-negative constant which depends on $\si_{1_k}$ and $\si_{2_k}$.
\end{enumerate}
We note that from (\ref{axiom1}), the Dirichlet series $\Ll_k (s) = \sum_{n=1}^\infty a_k (n) n^{-s}$ is absolutely convergent for $\si > \al_k +1$. If $a_k =1$ for all $k$, then $\Ll_1 ,\dots,\Ll_r$ are the Riemann zeta-function, which is defined by $\zeta(s) =  \sum_{n-1}^\infty n^{-s}$ for $\si>1$. In this case, the series (\ref{formula:MTML}) is nothing but the Mordell-Tornheim multiple zeta-function which is defined by 
\begin{equation}
\lab{formula:MT-MZF}
\zeta_{MT,r} (s_1, \dots, s_r, s_{r+1}) =\sum_{m_1, \dots,m_r=1}^\infty \frac{1}{m_1^{s_1} \dots m_r^{s_r}(m_1+ \dots +m_r)^{s_{r+1}}}.
\end{equation}
This multiple series (\ref{formula:MT-MZF}) converges absolutely when 
\begin{equation*}
\sum_{\ell=1}^j \si_{k_\ell} + \si_{r+1} >j,
\end{equation*}
with $1 \leq k_1<k_2<\dots<k_j \leq r$ for any $j=1,2,\dots,r$ (\cite[Lemma 2.1]{OO}). Further, Matsumoto \cite{Ma03} proved that (\ref{formula:MT-MZF}) can be continued meromorphically to the whole $\C^{r+1}$ by applying the Mellin-Barnes integral formula (see (\ref{formula:MBI})) and he determined the possible singularities of (\ref{formula:MT-MZF}). In addition, if all $a_k$ are the Dirichlet characters $\chi_k$ modulo $q>1$, then (\ref{formula:MTML}) coincides with 
\begin{equation}
\lab{formula:L_MT}
L_{MT,r} (s_1, \dots, s_r, s_{r+1}; \chi_1, \dots, \chi_r) = \sum_{m_1, \dots,m_r=1}^\infty \frac{\chi_1(m_1) \dots\chi_r(m_r)}{m_1^{s_1} \dots m_r^{s_r}(m_1+ \dots +m_r)^{s_{r+1}}}.
\end{equation}
This series was introduced by Wu in his unpublished master thesis \cite{Wu03}. In \cite{Wu03} (see also \cite[Theorem 3]{Ma06}), he proved that this series can be continued meromorphically to $\C^{r+1}$. If none of the characters $\chi_1, \dots,\chi_r$ are principal, then $L_{MT,r}$ is entire. If there are $k$ principal characters $\chi_{j_1},\dots \chi_{j_k}$ among them, then possible singularities are located only on the subsets of $\C^{r+1}$ defined by one of the following
equations:
\begin{equation*}
\sum_{a=1}^h s_{j_{i(a)}} +s_{r+1} = h -\ell \left( 1-\left[\frac{h}{r}\right] \right),
\end{equation*}
where $1 \leq h \leq k, 1 \leq i(1)<\dots <i(h)\leq k$ and $\ell \in \N \cup \{0\}$.

We have to note that a different type of multiple Dirichlet series has been studied by Tanigawa and Matsumoto \cite{MaTa03}. In \cite{MaTa03}, they considered the following multiple Dirichlet series 
\begin{equation*}
\Phi_r (s_1,\dots,s_r;\ph_1, \dots,\ph_r) = \sum_{m_1, \dots,m_r=1}^\infty \frac{a_1(m_1) a_2(m_2) \dots a_r(m_r)}{m_1^{s_1} (m_1+m_2)^{s_2} \dots (m_1+\dots+m_r)^{s_r}},
\end{equation*}
where $\ph_k (s) = \sum_{n\geq 1} a_k(n) n^{-s}$ ($1 \leq k \leq r$) satisfies some axioms, and showed that $\Phi_r (s_1,\dots,s_r;\ph_1, \dots,\ph_r)$ can be continued meromorphically to the whole $\C^r$ space, and determined its possible singularities.

The major purpose of this paper is to investigate the expressions of (\ref{formula:MTML}) in terms of the confluent hypergeometric function. For the case of the Euler-Zagier double zeta-function, which is defined by
\begin{equation}
\lab{formula:EZ-DZF}
\zeta_{EZ,2}(s_1,s_2) = \sum_{m=1}^\infty \sum_{n=1}^\infty \frac{1}{m^{s_1}(m+n)^{s_2}},
\end{equation}
it is known (for example \cite{Ka93}, \cite{Ma98}) that (\ref{formula:EZ-DZF}) has an expression in terms of the confluent hypergeometric function
\begin{equation}
\lab{confluent hypergeometric function} 
\Psi(a,c;x) = \frac{1}{\Gm(a)} \int_0^{\infty e^{i \phi}} e^{-xy} y^{a-1}(1+y)^{c-a-1} dy,
\end{equation}
where $\re(a)>0, -\pi <\phi <\pi, \lvert\phi+ \arg x\rvert<\pi/2$ (\cite[6.5 (2)]{ErMaObTr}). By using this property, Matsumoto \cite{Ma04} showed the functional equations for (\ref{formula:EZ-DZF}). 

Recently, Okamoto and Onozuka \cite{OO} studied analytic properties of (\ref{formula:MT-MZF}). They applied Matsumoto's method in \cite{Ma04} to (\ref{formula:MT-MZF}) and obtained an expression of (\ref{formula:MT-MZF}) in terms of (\ref{confluent hypergeometric function}) for general depth $r$.

Define a modified function
\begin{align*}
g(s_1,\dots,s_r,s_{r+1}) &= \zeta_{MT,r}(s_1,\dots,s_r,s_{r+1}) \\
& -\frac{\Gm(1-s_r) \Gm(s_r+s_{r+1}-1)}{\Gm(s_{r+1})} \zeta_{MT,r-1} (s_1,\dots,s_{r-1}, s_r+s_{r+1}-1).
\end{align*}
Also we put $\sigma_\alpha (k) =\sum_{d \mid k} d^\alpha$. Then Okamoto and Onozuka \cite{OO} gave the following formula:

\begin{thm}[{\cite[Theorem 1.2]{OO}}]
\lab{thm:FEforMTZF}
We have
\begin{align*}
& \frac{g(-s_1,\dots,-s_{r-1},1-s_{r+1},1-s_r)}{i^{s_r+s_{r+1}-1} \Gm(s_{r+1})} \\
&\quad +e^{\frac{\pi i}{2} (s_r+s_{r+1}-1)} F_r^+ (s_1,\dots,s_r,s_{r+1}) + e^{-\frac{\pi i}{2} (s_r+s_{r+1}-1)} F_r^- (s_1,\dots,s_r,s_{r+1}) \\
&= \frac{g(s_1,\dots,s_{r-1},s_r,s_{r+1})}{(2\pi)^{s_r+s_{r+1}-1} \Gm(1-s_r)} \\
&\quad+e^{-\frac{\pi i}{2} (s_r+s_{r+1}-1)} \sum_{\ell_1,\dots,\ell_{r-1} =1}^\infty \si_{MT,r-1} (s_1,\dots,s_{r-1},s_r+s_{r+1}-1;\ell_1,\dots,\ell_{r-1}) \\
& \times \left\{ \Psi (s_{r+1},s_r+s_{r+1}; 2\pi i (\ell_1+\dots+\ell_{r-1})) \right.\\
&\qquad \left. + \Psi (s_{r+1},s_r+s_{r+1}; -2\pi i (\ell_1+\dots+\ell_{r-1})) \right\},
\end{align*}
where
\begin{align}
\lab{formula:F^pm}
F^\pm (s_1,\dots,s_r,s_{r+1}) &= \sum_{\ell_1,\dots,\ell_{r-1} =1}^\infty \frac{\si_{s_1+\dots+s_r+s_{r+1}-1} (\gcd(\ell_1,\dots,\ell_{r-1}))}{\ell_1^{s_1} \dots \ell_{r-1}^{s_{r-1}}} \\
& \times \Psi (s_{r+1},s_r+s_{r+1}; \pm 2\pi i (\ell_1+\dots+\ell_{r-1})) \nonumber
\end{align}
and
\begin{equation}
\si_{MT,r} (s_1,\dots,s_{r-1},s_{r+1};\ell_1,\dots,\ell_r) = \sum_{\substack{d_1 \mid \ell_1,\dots,d_r \mid \ell_r \\ d_j \geq \frac{\ell_j}{\gcd(\ell_1,\dots,\ell_r)}}} d_1^{s_1} \dots d_r^{s_r} (d_1+\dots+d_r)^{s_{r+1}}.
\end{equation}
\end{thm}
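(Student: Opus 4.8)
The plan is to adapt Matsumoto's method for the Euler--Zagier case, using the Mellin--Barnes integral formula
\begin{equation*}
(1+\la)^{-s} = \frac{1}{2\pi i}\int_{(c)} \frac{\Gm(s+z)\Gm(-z)}{\Gm(s)}\la^z\,dz \qquad (-\re(s)<c<0,\ |\arg\la|<\pi)
\end{equation*}
to peel off the last variable $m_r$. Writing $(m_1+\dots+m_r)^{-s_{r+1}}=m_r^{-s_{r+1}}(1+N/m_r)^{-s_{r+1}}$ with $N=m_1+\dots+m_{r-1}$, summing over $m_r$, and re-summing the inner series, I would first obtain
\begin{equation*}
\zeta_{MT,r}(s_1,\dots,s_{r+1}) = \frac{1}{2\pi i}\int_{(c)} \frac{\Gm(s_{r+1}+z)\Gm(-z)}{\Gm(s_{r+1})}\,\zeta(s_r+s_{r+1}+z)\,\zeta_{MT,r-1}(s_1,\dots,s_{r-1},-z)\,dz.
\end{equation*}
When the contour is shifted into the region where both Dirichlet factors converge after the functional equation has been applied, the only pole crossed is the simple pole of $\zeta(s_r+s_{r+1}+z)$ at $z=1-s_r-s_{r+1}$; its residue is exactly the term subtracted in the definition of $g$, so $g$ itself is represented by the shifted integral.

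Next I would insert the asymmetric functional equation $\zeta(w)=2^{w}\pi^{w-1}\sin(\pi w/2)\Gm(1-w)\zeta(1-w)$ with $w=s_r+s_{r+1}+z$. The factor $\sin(\pi w/2)$ splits into $e^{\pm\pi i w/2}$, which is the source of the two exponentials $e^{\pm\frac{\pi i}{2}(s_r+s_{r+1}-1)}$ and which turns the power appearing in the integral into $(\pm 2\pi i\,\cdot)^z$. Expanding $\zeta(1-s_r-s_{r+1}-z)$ and $\zeta_{MT,r-1}(s_1,\dots,s_{r-1},-z)$ into Dirichlet series and interchanging summation with integration, I would identify the remaining $z$-integral with $\Gm(1-s_r)\,\Psi(s_{r+1},s_r+s_{r+1};\pm 2\pi i nN)$ via the Mellin--Barnes representation of the confluent hypergeometric function, matched after the substitution $z\mapsto -s_{r+1}-t$. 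The double sum over $n$ and $(m_1,\dots,m_{r-1})$ is then reorganised by $\ell_j=n m_j$: since $\sum\ell_j=nN$ and $\gcd(\ell_1,\dots,\ell_{r-1})=n\gcd(m_1,\dots,m_{r-1})$, summing over $n\mid\gcd(\ell_1,\dots,\ell_{r-1})$ produces the divisor sum $\si_{s_1+\dots+s_r+s_{r+1}-1}(\gcd(\ell_1,\dots,\ell_{r-1}))$ of $F^\pm$. A direct constant chase then yields the forward expansion $g(s_1,\dots,s_{r+1})/\!\left[(2\pi)^{s_r+s_{r+1}-1}\Gm(1-s_r)\right]=e^{\frac{\pi i}{2}(s_r+s_{r+1}-1)}F^+ + e^{-\frac{\pi i}{2}(s_r+s_{r+1}-1)}F^-$, which accounts for the first term on the right-hand side of the asserted identity.

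To obtain the remaining term I would run the identical computation for $g$ at the dual point $(-s_1,\dots,-s_{r-1},1-s_{r+1},1-s_r)$. Under this substitution $s_{r+1}\mapsto 1-s_r$ and $s_r+s_{r+1}-1\mapsto -(s_r+s_{r+1}-1)$, so Kummer's transformation $\Psi(a,c;x)=x^{1-c}\Psi(a-c+1,2-c;x)$ restores the parameters $(s_{r+1},s_r+s_{r+1})$ at the cost of a factor $(\pm 2\pi i\sum\ell_j)^{\,s_r+s_{r+1}-1}$. Absorbing this factor and regrouping the divisors converts the simple coefficients into the constrained Mordell--Tornheim divisor sum $\si_{MT,r-1}(s_1,\dots,s_{r-1},s_r+s_{r+1}-1;\ell_1,\dots,\ell_{r-1})$, giving $g(-s_1,\dots,-s_{r-1},1-s_{r+1},1-s_r)/\!\left[i^{s_r+s_{r+1}-1}\Gm(s_{r+1})\right]$ equal to $e^{-\frac{\pi i}{2}(s_r+s_{r+1}-1)}$ times the sum of the two $\Psi$-terms. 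Adding the two expansions is precisely the claimed functional equation.

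The genuine difficulties are analytic and combinatorial rather than formal. I expect the main obstacle to be the bookkeeping in the dual step: verifying that the factor $(\sum\ell_j)^{s_r+s_{r+1}-1}$ produced by Kummer's transformation, together with the regrouping of divisors, reproduces exactly the constrained sum $\si_{MT,r-1}$ with its lower-bound condition $d_j\geq \ell_j/\gcd(\ell_1,\dots,\ell_{r-1})$. Alongside this one must justify every interchange of the $(r-1)$-fold series with the contour integral, which requires combining the order estimate $\zeta(\si+it)=O(|t|^A)$ on vertical lines with Stirling's formula for the $\Gm$-factors to dominate the integrand, and must confirm that no pole other than $z=1-s_r-s_{r+1}$ is crossed during the contour shift.
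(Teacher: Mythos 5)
Your skeleton is exactly the route this paper itself takes (which, as the paper notes, differs from Okamoto--Onozuka's original Hankel-contour argument): the Mellin--Barnes splitting of $(m_1+\cdots+m_r)^{-s_{r+1}}$ with $m_r$ factored out, the leftward contour shift across the single pole of $\zeta(s_r+s_{r+1}+z)$ at $z=1-s_r-s_{r+1}$, whose residue is the correction term in $g$, the functional equation (\ref{FE-zeta}) with its cosine split into exponentials, the identification of the shifted integral with $\Gm(1-s_r)\Psi(s_{r+1},s_r+s_{r+1};\pm 2\pi i(\ell_1+\cdots+\ell_{r-1}))$ via (\ref{Psi-MBI}) (no substitution $z\mapsto -s_{r+1}-t$ is needed: (\ref{Psi-MBI}) with $a=s_{r+1}$, $c=s_r+s_{r+1}$ matches the integrand directly, the denominator $\Gm(a-c+1)$ supplying the $\Gm(1-s_r)$), and the regrouping $\ell_j=nm_j$ --- this is precisely the proof of Lemma \ref{lem:L-MT} (\ref{lem:L-MT1}) specialized to $a_1=\cdots=a_{r-1}=1$; and combining that identity at $(s_1,\dots,s_{r+1})$ with the same identity at the dual point, via Kummer's transformation (\ref{psi-reflection}), is exactly how the Remark after Theorem \ref{thm:FEforMTLF2} derives Theorem \ref{thm:FEforMTZF} from Theorem \ref{thm:FEforMTLF1}. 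Your constant chase is correct: the factors $(\pm i)^{s_r+s_{r+1}-1}$ from Kummer cancel the dual exponentials and leave the $i^{-(s_r+s_{r+1}-1)}$ normalization. Moreover, the divisor bookkeeping you flag as the main obstacle is in fact a one-line substitution: the regrouping yields $\sum_{n\mid\gcd(\ell_1,\dots,\ell_{r-1})}\prod_j(\ell_j/n)^{s_j}\bigl((\ell_1+\cdots+\ell_{r-1})/n\bigr)^{s_r+s_{r+1}-1}$, i.e.\ the tuples $d_j=\ell_j/n$ with a \emph{common} cofactor $n$, and this diagonal reading of $\si_{MT,r-1}$ is the one you must match; be aware that the inequality $d_j\geq \ell_j/\gcd$ taken by itself also admits non-diagonal tuples once $r\geq 3$ (e.g.\ $(d_1,d_2)=(1,2)$ for $\ell_1=\ell_2=2$), which the convolution does not produce, so pin the definition down before declaring the match.

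The genuine gap is the concluding step ``adding the two expansions.'' Each expansion is established only where your manipulations are legitimate, namely in (\ref{abs. cnv. reg}) together with $\si_r<0$, $\si_{r+1}>0$ --- and these two regions are disjoint: taking $j=1$, $k_1=r$ in (\ref{abs. cnv. reg}), the forward expansion needs $\si_r+\si_{r+1}>1$, while the dual expansion, being the same statement at $(-s_1,\dots,-s_{r-1},1-s_{r+1},1-s_r)$, needs $(1-\si_{r+1})+(1-\si_r)>1$, i.e.\ $\si_r+\si_{r+1}<1$. So there is no point of $\C^{r+1}$ at which both hold as convergent identities, and you cannot add them until every term --- in particular $F^\pm$ --- has been continued meromorphically to all of $\C^{r+1}$. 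That continuation is the actual content of the paper's Section \ref{sec4}: one applies (\ref{psi-reflection}) and then the asymptotic expansion $\Psi(a,c;x)=\sum_{k=0}^{N-1}\frac{(-1)^k(a)_k(a-c+1)_k}{k!}x^{-a-k}+\rho_N(a,c;x)$ with Matsumoto's bound on $\rho_N$, which exhibits $F^\pm$ as finitely many meromorphic products $\zeta(1-s_r+k)\,\zeta_{MT,r-1}(s_1,\dots,s_{r-1},s_{r+1}+k)$ plus a remainder series converging in a region that recedes arbitrarily far as $N\to\infty$. Your proposal invokes Kummer only for parameter bookkeeping in the dual step, and its analytic care (Stirling bounds, interchange of sum and integral, pole accounting) all lives inside the original region; without the continuation of $F^\pm$ the final addition has no common domain and the proof does not close.
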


\section{Statement of main results}\label{sec2}
Before stating the main results, we introduce the following function. Let 
\begin{align*}
& F_x^\pm (s_1,\dots,s_{r+1} ; a_1, \dots, a_r) \\ 
&= \sum_{\ell_1,\dots,\ell_{r-1}=1}^\infty \sum_{n \mid \gcd(\ell_1,\dots,\ell_{r-1})} \frac{n^{s_1+\dots+s_r+s_{r+1}-1} a_1(\frac{\ell_1}{n}) \dots a_{r-1}(\frac{\ell_{r-1}}{n}) a_r(n)}{\ell_1^{s_1} \dots \ell_{r-1}^{s_{r-1}}} \nonumber \\ 
&\quad \times \Psi(s_{r+1}, s_r+s_{r+1}; \pm 2\pi i(\ell_1+\dots+\ell_{r-1})/x).
\end{align*} 
Then we obtain the following formulas, which are the main results in the present paper.
\begin{thm}
\lab{thm:FEforMTLF1}
We assume that $\Ll_r(s)=\zeta (s)$. Let 
\begin{align*}
&\G_r (s_1,\dots,s_r,s_{r+1} ; a_1, \dots, a_{r-1},1) \\
&= \Ll_{MT,r} (s_1, \dots,s_r,s_{r+1}; a_1, \dots, a_{r-1},1) \\
& -\frac{\Gm(1-s_r) \Gm(s_r+s_{r+1}-1)}{\Gm(s_{r+1})} \Ll_{MT,r-1} (s_1, \dots,s_{r-1},s_r+s_{r+1}-1; a_1, \dots, a_{r-1}),
\end{align*}
which is a modified function of (\ref{formula:MTML}). Then 
except for the singularity points, it holds that
\begin{align}
\lab{formula:FEforMTLF1}
&\G_r (s_1, \dots, s_r,s_{r+1}  ; a_1, \dots, a_{r-1},1) \\
&=\left(2\pi\right)^{s_r+s_{r+1}-1} \Gm(1-s_r) \left\{ e^{\pi i \frac{s_r+s_{r+1}-1}{2}} F_1^+ (s_1,\dots,s_{r+1}; a_1, \dots, a_{r-1},1) \right. \nonumber \\
&\quad \left. + e^{-\pi i \frac{s_r+s_{r+1}-1}{2}} F_1^- (s_1,\dots,s_{r+1}; a_1, \dots, a_{r-1},1) \right\}. \nonumber 
\end{align}
\end{thm}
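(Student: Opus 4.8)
The plan is to adapt the Mellin--Barnes method of Okamoto--Onozuka (the proof of Theorem~\ref{thm:FEforMTZF}) to the weighted setting, exploiting the hypothesis $\Ll_r=\zeta$ to isolate a single Riemann zeta-factor on which the classical functional equation can act. First I would peel off the last summation variable $m_r$. Writing $(m_1+\dots+m_r)^{-s_{r+1}}=m_r^{-s_{r+1}}(1+(m_1+\dots+m_{r-1})/m_r)^{-s_{r+1}}$ and inserting the Mellin--Barnes integral formula (\ref{formula:MBI}) with $\lambda=(m_1+\dots+m_{r-1})/m_r$, the sum over $m_r$ collapses (because $a_r\equiv1$) to $\zeta(s_r+s_{r+1}+z)$, while the sum over $m_1,\dots,m_{r-1}$ reproduces $\Ll_{MT,r-1}(s_1,\dots,s_{r-1},-z;a_1,\dots,a_{r-1})$. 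This yields
\begin{equation*}
\Ll_{MT,r}(\cdots;a_1,\dots,a_{r-1},1)=\frac{1}{2\pi i}\int_{(c)}\frac{\Gm(s_{r+1}+z)\Gm(-z)}{\Gm(s_{r+1})}\,\zeta(s_r+s_{r+1}+z)\,\Ll_{MT,r-1}(s_1,\dots,s_{r-1},-z)\,dz
\end{equation*}
on a contour $-\re(s_{r+1})<c<0$ in the region of absolute convergence.

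The only pole of the integrand produced by $\zeta$ sits at $z=1-s_r-s_{r+1}$, and a direct residue computation (using $\Gm(s_{r+1}+(1-s_r-s_{r+1}))=\Gm(1-s_r)$ and $\Gm(-(1-s_r-s_{r+1}))=\Gm(s_r+s_{r+1}-1)$) shows that its contribution is exactly the term subtracted off in the definition of $\G_r$. Hence $\G_r$ is represented by the same integral after that pole has been removed, i.e.\ by the integral taken over a contour lying to the left of $z=1-s_r-s_{r+1}$.

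Next I would substitute the asymmetric functional equation $\zeta(w)=2^{w}\pi^{w-1}\sin(\pi w/2)\Gm(1-w)\zeta(1-w)$ with $w=s_r+s_{r+1}+z$ into the integrand and expand $\zeta(1-s_r-s_{r+1}-z)=\sum_{n\ge1}n^{s_r+s_{r+1}+z-1}$. Writing $\sin(\pi w/2)=(e^{\pi i w/2}-e^{-\pi i w/2})/(2i)$ splits the integral into two pieces carrying the factors $e^{\pm\pi i(s_r+s_{r+1}+z)/2}$, which will become the prefactors $e^{\pm\pi i(s_r+s_{r+1}-1)/2}$. The crucial combinatorial step is the change of variables $\ell_j=nm_j$: the powers of $n$ collected from $\zeta(1-\cdots)$, from $\prod m_j^{-s_j}$, and from $(m_1+\dots+m_{r-1})^{z}$ recombine to $n^{s_1+\dots+s_r+s_{r+1}-1}$, the products $a_j(m_j)=a_j(\ell_j/n)$ appear under the divisibility constraint $n\mid\gcd(\ell_1,\dots,\ell_{r-1})$, and the $z$-dependence coalesces into $(\pm2\pi i(\ell_1+\dots+\ell_{r-1}))^{z}$, reproducing precisely the summand structure of $F_1^{\pm}$.

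Finally I would evaluate the remaining $z$-integral: after the substitution $z\mapsto -s_{r+1}-z$ the three gamma factors $\Gm(s_{r+1}+z)\Gm(-z)\Gm(1-s_r-s_{r+1}-z)$ match the Mellin--Barnes representation of the confluent hypergeometric function, so the integral evaluates to $\Gm(1-s_r)\,\Psi(s_{r+1},s_r+s_{r+1};\pm2\pi i(\ell_1+\dots+\ell_{r-1}))$; bookkeeping the constants $2^{s_r+s_{r+1}}\pi^{s_r+s_{r+1}-1}$ together with the factor $1/(2i)$ from the sine then produces the overall prefactor $(2\pi)^{s_r+s_{r+1}-1}\Gm(1-s_r)e^{\pm\pi i(s_r+s_{r+1}-1)/2}$ of (\ref{formula:FEforMTLF1}). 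The main obstacle is analytic rather than algebraic: one must justify interchanging the contour integral with the (only conditionally convergent) double sum over $n$ and the $m_j$, and shifting the contour across the relevant poles. This is where hypotheses (\ref{axiom1})--(\ref{axiom3}) enter, since the polynomial order bound (\ref{axiom3}) on $\Ll_1,\dots,\Ll_{r-1}$, combined with Stirling's estimate for the gamma factors and the convexity bound for $\zeta$, guarantees absolute convergence on vertical lines and supplies the horizontal decay needed to move the contour and to apply Fubini.
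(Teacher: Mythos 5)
Your derivation reproduces, essentially step for step, the paper's proof of Lemma \ref{lem:L-MT}\,(1): the same peeling off of $m_r$ with $\lambda=(m_1+\dots+m_{r-1})/m_r$ in (\ref{formula:MBI}), the same leftward contour shift picking up the residue of $\zeta(s_r+s_{r+1}+z)$ at $z=1-s_r-s_{r+1}$ (which is exactly the term subtracted in $\G_r$), the same substitution of the asymmetric functional equation with the $\sin/\cos$ split into $e^{\pm\pi i(s_r+s_{r+1}+z-1)/2}$, the rearrangement $\ell_j=nm_j$ (this is precisely the paper's Lemma \ref{lem:convolution} with $b=s_1+\dots+s_r+s_{r+1}-1$), and the identification of the $z$-integral with $\Gm(1-s_r)\Psi(s_{r+1},s_r+s_{r+1};\pm 2\pi i(\ell_1+\dots+\ell_{r-1}))$ via (\ref{Psi-MBI}). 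Two bookkeeping slips: first, your initial contour condition $-\re(s_{r+1})<c<0$ is insufficient — you also need $c>1-\si_r-\si_{r+1}$ so that the $m_r$-sum converges absolutely (this is why the contour starts to the \emph{right} of the zeta pole and why the standing hypotheses $\si_r<0$, $\si_{r+1}>0$ together with the region (\ref{abs. cnv. reg}) are needed to make the strip nonempty), and $c$ must lie below the constraints $\sum_{\ell=1}^j(\si_{k_\ell}-\al_{k_\ell})-j$ coming from the $(r-1)$-fold series; second, no substitution $z\mapsto -s_{r+1}-z$ is needed at the end, since with $a=s_{r+1}$, $c=s_r+s_{r+1}$ the three gamma factors $\Gm(s_{r+1}+z)\Gm(-z)\Gm(1-s_r-s_{r+1}-z)$ already match (\ref{Psi-MBI}) on the contour $(\eta)$. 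Neither slip is fatal.

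The genuine gap is at the end: everything you do establishes (\ref{formula:FEforMTLF1}) only in the initial region (\ref{abs. cnv. reg}) with $\si_r<0$, $\si_{r+1}>0$, whereas the theorem asserts the identity on all of $\C^{r+1}$ outside the singularities. For that, one must show that the right-hand side, i.e.\ $F_1^\pm$, continues meromorphically to the whole $\C^{r+1}$; the left-hand side is covered by Lemma \ref{lem:analytic continuation of L-MT}, but $F_1^\pm$ is defined by a series converging only in a restricted region, and your closing paragraph on Fubini and contour shifts addresses the legitimacy of the derivation inside that region, not the continuation beyond it. In the paper, this continuation is the \emph{entire} content of the Section \ref{sec4} proof of the theorem: one applies the reflection formula (\ref{psi-reflection}), expands $\Psi$ by its asymptotic series with remainder $\rho_N$, recognizes the truncated terms as $\zeta(1-s_r+k)\,\Ll_{MT,r-1}(s_1,\dots,s_{r-1},s_{r+1}+k;a_1,\dots,a_{r-1})$ (meromorphic by assumption (\ref{axiom2}) and Lemma \ref{lem:analytic continuation of L-MT}), and bounds $\rho_N$ by Matsumoto's estimate so that the remainder series converges absolutely in a region that expands with $N$; since $N$ is arbitrary, $F_1^\pm$ continues to $\C^{r+1}$ and the identity propagates by analytic continuation. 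Without this step the phrase ``except for the singularity points'' is not justified. (A cosmetic point: you frame your argument as adapting the Okamoto--Onozuka proof of Theorem \ref{thm:FEforMTZF}, but that proof uses Hankel contour integrals; what you actually wrote is the Mellin--Barnes route of the present paper, following Kiuchi--Tanigawa--Zhai.)
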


Moreover, we obtain a similar formula for the twisted case, where a modified function does not appear. Let 
\begin{equation*}
\kappa_j=\kappa(\chi_j)=\begin{cases}
0 & if \quad \chi_j(-1)=1 \\
1 & if \quad \chi_j(-1)=-1, \\
\end{cases}
\end{equation*}
and $\eps(\chi_r) =\frac{\tau(\chi_r)}{i^{\kappa_r} \sqrt{q}}$, where $\tau(\chi) = \sum_{a=1}^q \chi(a) e^{2 \pi i \frac{a}{q}}$ is the Gauss sum. Then we obtain the following formula.
\begin{thm}
\lab{thm:FEforMTLF2}
We assume that $a_r$ be the primitive Dirichlet character $\chi_r$ modulo $q>1$. Then 
except for the singularity points, it holds that
\begin{align}
\lab{formula:FEforMTLF2}
&\Ll_{MT,r}(s_1,\dots,s_r, s_{r+1}; a_1,\dots,a_{r-1},\chi_r) \\
&= q^{-\frac{1}{2}} \eps(\chi_r) \left( \frac{2\pi}{q} \right)^{s_r+s_{r+1}-1} \Gm(1-s_r) \left\{ e^{\pi i \frac{s_r+s_{r+1}+\kappa_r-1}{2}} F_q^+ (s_1,\dots,s_{r+1} ; a_1, \dots, a_{r-1},\overline{\chi_r})  \right. \nonumber \\
&\quad \left. +e^{-\pi i \frac{s_r+s_{r+1}+\kappa_r-1}{2}} F_q^- (s_1,\dots,s_{r+1} ; a_1, \dots, a_{r-1}, \overline{\chi_r}) \right\}. \nonumber
\end{align}
\end{thm}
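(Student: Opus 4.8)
The plan is to follow the scheme behind Theorem~\ref{thm:FEforMTLF1}, replacing the untwisted Hurwitz-type transformation of the innermost sum by its character-twisted analogue; the genuinely new ingredients — the Gauss sum $\tau(\chi_r)$, the constant $\eps(\chi_r)$, and the parity shift $\kappa_r$ inside the exponential factors — will all be generated by the functional equation of the Dirichlet $L$-function $L(s,\chi_r)$. First I would isolate the summation over $m_r$: setting $N=m_1+\dots+m_{r-1}$ and factoring out the quantities attached to $\chi_r$,
\[
\Ll_{MT,r}(s_1,\dots,s_{r+1};a_1,\dots,a_{r-1},\chi_r)=\sum_{m_1,\dots,m_{r-1}\ge1}\frac{a_1(m_1)\cdots a_{r-1}(m_{r-1})}{m_1^{s_1}\cdots m_{r-1}^{s_{r-1}}}\,\Phi(s_r,s_{r+1};N),
\]
where $\Phi(s_r,s_{r+1};N)=\sum_{m_r\ge1}\chi_r(m_r)\,m_r^{-s_r}(m_r+N)^{-s_{r+1}}$. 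The core of the argument is a \emph{one-dimensional identity} that expresses $\Phi$ through the confluent hypergeometric function $\Psi$.

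To obtain it I would apply the Mellin--Barnes formula (\ref{formula:MBI}) to $(m_r+N)^{-s_{r+1}}=m_r^{-s_{r+1}}(1+N/m_r)^{-s_{r+1}}$ and interchange the $m_r$-sum with the contour integral, getting
\[
\Phi(s_r,s_{r+1};N)=\frac{1}{2\pi i}\int_{(c)}\frac{\Gm(s_{r+1}+z)\Gm(-z)}{\Gm(s_{r+1})}\,N^z\,L(s_r+s_{r+1}+z,\chi_r)\,dz
\]
on a line $-\re(s_{r+1})<c<0$, in a region where $\re(s_r+s_{r+1})$ is large. I would then substitute the functional equation of $L(\cdot,\chi_r)$ into the integrand, which turns $L(s_r+s_{r+1}+z,\chi_r)$ into the $\Gm$- and exponential-factors of the completed $L$-function times $L(1-s_r-s_{r+1}-z,\overline{\chi_r})=\sum_{k\ge1}\overline{\chi_r}(k)\,k^{\,s_r+s_{r+1}+z-1}$; the two exponential terms (from the even/odd split of the $L$-equation) produce the two signs $\pm$, the Gauss sum yields $q^{-1/2}\eps(\chi_r)$, and the character's parity enters as the shift $\kappa_r$. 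Crucially, because $\chi_r$ is primitive and non-principal, $L(s,\chi_r)$ is entire: the residue at $z=1-s_r-s_{r+1}$ that in the case $\Ll_r=\zeta$ would contribute the term $\tfrac{\Gm(1-s_r)\Gm(s_r+s_{r+1}-1)}{\Gm(s_{r+1})}\,\Ll_{MT,r-1}$ defining $\G_r$ simply vanishes, which is exactly why no modified function appears in (\ref{formula:FEforMTLF2}). Recognizing the remaining $z$-integral as the Mellin--Barnes representation of $\Psi(s_{r+1},s_r+s_{r+1};\cdot)$ gives
\begin{align*}
\Phi(s_r,s_{r+1};N) &= q^{-\frac12}\eps(\chi_r)\Bigl(\tfrac{2\pi}{q}\Bigr)^{s_r+s_{r+1}-1}\Gm(1-s_r)\sum_{k\ge1}\overline{\chi_r}(k)\,k^{\,s_r+s_{r+1}-1}\\
&\quad\times\Bigl\{e^{\pi i\frac{s_r+s_{r+1}+\kappa_r-1}{2}}\Psi\bigl(s_{r+1},s_r+s_{r+1};\tfrac{2\pi i kN}{q}\bigr)\\
&\quad\qquad +e^{-\pi i\frac{s_r+s_{r+1}+\kappa_r-1}{2}}\Psi\bigl(s_{r+1},s_r+s_{r+1};-\tfrac{2\pi i kN}{q}\bigr)\Bigr\}.
\end{align*}

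With this identity I would substitute back into the factored form and carry out the change of variables $\ell_j=k\,m_j$ $(1\le j\le r-1)$, so that $kN=\ell_1+\dots+\ell_{r-1}$ becomes the argument of $\Psi$ while $k$ becomes a common divisor $n:=k$ of $\gcd(\ell_1,\dots,\ell_{r-1})$. Collecting powers, $\prod_j m_j^{-s_j}=n^{s_1+\dots+s_{r-1}}\prod_j\ell_j^{-s_j}$, and multiplication by $k^{\,s_r+s_{r+1}-1}=n^{\,s_r+s_{r+1}-1}$ builds up precisely $n^{\,s_1+\dots+s_r+s_{r+1}-1}$, while $\overline{\chi_r}(k)=\overline{\chi_r}(n)$ supplies the factor $a_r(n)$ with $a_r=\overline{\chi_r}$ and $a_j(m_j)=a_j(\ell_j/n)$. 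This is exactly the definition of $F_q^{\pm}(s_1,\dots,s_{r+1};a_1,\dots,a_{r-1},\overline{\chi_r})$, and pulling the common constant to the front yields (\ref{formula:FEforMTLF2}).

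The hard part will be the analytic justification rather than the bookkeeping. I must show that the interchange of the $(r-1)$-fold summation with the Mellin--Barnes integral, the substitution of the $L$-equation inside the integrand, and the subsequent resummation are all legitimate; this needs uniform bounds on the integrand along vertical lines, obtained from Stirling's formula for the $\Gm$-factors together with the growth hypotheses (\ref{axiom1})--(\ref{axiom3}) on $\Ll_1,\dots,\Ll_{r-1}$, and a verification that the doubly-indexed series defining $F_q^{\pm}$ converges, using the bound $\Psi(a,c;x)=O(|x|^{-\re a})$. Since the one-dimensional identity and the reassembled formula hold initially only on overlapping domains of convergence, the final step is to invoke meromorphic continuation to extend (\ref{formula:FEforMTLF2}) to all $(s_1,\dots,s_{r+1})$ away from the singular set. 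A secondary point is to keep the argument of $\Psi$ inside the admissible sector of its integral representation for both signs $\pm$, which fixes the admissible branch in each of the two terms.
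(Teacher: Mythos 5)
Your derivation of the identity itself follows essentially the same route as the paper's proof of Lemma \ref{lem:L-MT}(\ref{lem:L-MT2}): the paper likewise applies (\ref{formula:MBI}) after factoring out $m_r$, obtains an integrand containing $\Ll_{MT,r-1}(s_1,\dots,s_{r-1},-z;a_1,\dots,a_{r-1})\,L(s_r+s_{r+1}+z,\chi_r)$, shifts the contour left (picking up no residue precisely because $L(s,\chi_r)$ is entire for primitive $\chi_r$ modulo $q>1$ --- your explanation for the absence of the modified function is exactly the paper's), substitutes the functional equation (\ref{FE-L}), and recognizes the remaining $z$-integral as (\ref{Psi-MBI}). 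Your one-dimensional identity for $\Phi(s_r,s_{r+1};N)$ followed by the change of variables $\ell_j=km_j$ is just an unpacked form of the paper's convolution Lemma \ref{lem:convolution} with $b=s_1+\dots+s_r+s_{r+1}-1$, and your bookkeeping (the build-up of $n^{s_1+\dots+s_r+s_{r+1}-1}$, the factor $q^{-1/2}\eps(\chi_r)(2\pi/q)^{s_r+s_{r+1}-1}$, the parity shift $\kappa_r$) checks out. One imprecision worth fixing: keeping the contour at $-\re(s_{r+1})<c<0$ with $\re(s_r+s_{r+1})$ large is not enough to expand $L(1-s_r-s_{r+1}-z,\overline{\chi_r})$ as an absolutely convergent Dirichlet series on the line of integration; you must move to a line $(\eta)$ with $-\si_{r+1}<\eta<-\si_r-\si_{r+1}$, and such an $\eta$ exists only under the standing hypotheses $\si_r<\al_r=0$ and $\si_{r+1}>0$ of Lemma \ref{lem:L-MT}, which also keep the line to the right of the poles of $\Gm(s_{r+1}+z)$. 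You never pin down this final contour or the accompanying constraints on $(s_1,\dots,s_{r+1})$.

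The genuine gap is your last step. You ``invoke meromorphic continuation'' to pass from the initial region to all of $\C^{r+1}$, but there is nothing yet to invoke: the series defining $F_q^{\pm}$ converges only in a restricted region (your bound $\Psi(a,c;x)=O(\lvert x\rvert^{-\re a})$ establishes convergence there but says nothing beyond it), and for the theorem to hold as stated each $F_q^{\pm}$ must itself be given a meaning on all of $\C^{r+1}$; the identity theorem extends an equality of functions only once both sides are known to be meromorphic on the full domain. Constructing this continuation is exactly the content of Section \ref{sec4} of the paper, and it needs a specific device: first transform via $\Psi(a,c;x)=x^{1-c}\Psi(a-c+1,2-c;x)$ (formula (\ref{psi-reflection})), then insert the $N$-term asymptotic expansion of $\Psi$ with remainder $\rho_N$; Lemma \ref{lem:convolution} identifies the resulting main terms as $\Ll_r(1-s_r+k)\,\Ll_{MT,r-1}(s_1,\dots,s_{r-1},s_{r+1}+k;a_1,\dots,a_{r-1})$, each meromorphic on $\C^{r+1}$ by assumption (\ref{axiom2}) and Lemma \ref{lem:analytic continuation of L-MT}, while Matsumoto's estimate on $\rho_N$ shows the remainder series converges absolutely in a region that expands with $N$; since $N$ is arbitrary, $F_q^{\pm}$ continues meromorphically and the identity holds off the singular set. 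Without this mechanism (or an equivalent one), your argument proves (\ref{formula:FEforMTLF2}) only on the initial overlapping domain of convergence, not the theorem as stated.
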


Okamoto and Onozuka showed Theorem \ref{thm:FEforMTZF} by using the Hankel contour integral (see {\cite[Section 12.22, p. 245]{WW}}). They generalized Matsumoto's idea \cite{Ma04} to the Mordell-Tornheim multiple zeta-function. 

Our present method is different. In fact, as proved in Lemma \ref{lem:L-MT}, our results can be proved by using the classical Mellin-Barnes integral formula, that is
\begin{equation}
\lab{formula:MBI}
(1+\la)^{-s} = \frac{1}{2 \pi i} \int_{(c)} \frac{\Gm(s+z)\Gm(-z)}{\Gm(s)} \la^z dz,
\end{equation}
where $s, \la \in \mathbb{C}$ with $\si =\re (s) >0, \lvert\arg \la \rvert< \pi, \la \neq 0, c$ is real with $-\si<c<0$, and the path $(c)$ of integration is the vertical line $\re (z) =c$ (see ({\cite[Section 14.51, p. 289, Corollary]{WW}}). This idea comes from the paper of Kiuchi, Tanigawa and Zhai \cite[Section 2]{KiTaZh11}. They pointed out that the Mellin-Barnes integral expression of (\ref{formula:EZ-DZF}) is connected with the following Mellin-Barnes integral expressin of (\ref{confluent hypergeometric function}):
\begin{equation}
\lab{Psi-MBI}
\Psi(a,c;x) = \frac{1}{2 \pi i} \int_{(\gamma)} \frac{\Gm(a+z) \Gm(-z) \Gm(1-c-z)}{\Gm(a) \Gm(a-c+1)} x^z dz
\end{equation}
for $-\re (a)<\gamma<\min \{ 0,1-\re(c) \}, -3\pi/2<\arg x < 3\pi/2$ (\cite[6.5 (5)]{ErMaObTr}).

\begin{rem}
In the case that $a_1(s) = \dots =a_{r-1}(s) =1$ in Theorem \ref{thm:FEforMTLF1}, we can obtain Theorem \ref{thm:FEforMTZF} by combining $\G_r (s_1,\dots,s_r,s_{r+1} ; 1, \dots, 1)$ and $\G_r (-s_1,\dots,-s_{r-1}, 1-s_{r+1},1-s_r ; 1, \dots,1)$. Thus Theorem \ref{thm:FEforMTLF1} gives a generalization of Theorem \ref{thm:FEforMTZF}.
\end{rem}

\begin{rem}
It seems difficult to relax the restriction in our results that $\Ll_r$ must be the Riemann zeta-function or a Dirichlet $L$-function with a primitive character. For example, if we put $\Ll_r$ as a more general $L$-function belonging to the extended Selberg class $\mathcal{S}^\#$, which is a family of fundamental $L$-functions, then $\Ll_r$ satisfies the functional equation of the form $\Lambda (s) = \om \overline{\Lambda(1-\overline{s})}$, where $\Lambda(s) = Q^s \prod_{j=1}^J \Gm(\la_j s+ \mu_j) \Ll_r(s)$ with positive real numbers $Q, \la_j$ and complex numbers $\mu_j, \om$ with $\re(\mu_j)\geq 0$ and $\lvert \om \rvert =1$. By the results of Kaczorowski and Perelli \cite[Theorem 3]{KP99} and $\la$-congecture (see \cite[Conjecture 4.2]{P05}), there would be no other $L$-functions except for the above two functions in $\mathcal{S}^\#$ that $J=1$. However, in order to apply (\ref{Psi-MBI}) (see the proof of Lemma \ref{lem:L-MT}), it must be $J=1$. 
\end{rem}

\section{Some lemmas}\label{sec3}
In this section, we prepare some lemmas. The first one is necessary to ensure that the indicated path of integration $(c)$ can be chosen in the proof of Lemma \ref{lem:analytic continuation of L-MT} and Lemma \ref{lem:L-MT}.
\begin{lem}
The function (\ref{formula:MTML}) is absolutely convergent in the region
\begin{equation}
\lab{abs. cnv. reg}
\sum_{\ell=1}^j \left(\si_{k_\ell} - \al_{k_\ell} \right) + \si_{r+1} >j,
\end{equation}
with $1 \leq k_1<k_2<\dots<k_j \leq r$ for any $j=1,2,\dots,r$. 
\end{lem}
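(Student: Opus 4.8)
The plan is to reduce the claim to the already-known convergence domain of the Mordell-Tornheim multiple zeta-function (\ref{formula:MT-MZF}) by a termwise comparison. First I would invoke axiom (\ref{axiom1}): for each $k$ and each small $\eps>0$ there is a constant $C_k=C_k(\eps)$ with $\lvert a_k(n)\rvert \leq C_k\, n^{\al_k+\eps}$. Taking absolute values in the general term of (\ref{formula:MTML}) then yields
\[
\left\lvert \frac{a_1(m_1)\cdots a_r(m_r)}{m_1^{s_1}\cdots m_r^{s_r}(m_1+\cdots+m_r)^{s_{r+1}}}\right\rvert \leq C\,\frac{1}{m_1^{\si_1-\al_1-\eps}\cdots m_r^{\si_r-\al_r-\eps}(m_1+\cdots+m_r)^{\si_{r+1}}},
\]
where $C=C_1\cdots C_r$ is independent of $m_1,\dots,m_r$. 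The right-hand side is, up to the constant $C$, the general term of (\ref{formula:MT-MZF}) evaluated at the real arguments $\si_k-\al_k-\eps$ in the first $r$ slots and $\si_{r+1}$ in the last.

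Next I would apply the convergence criterion for (\ref{formula:MT-MZF}) recalled in the introduction, namely \cite[Lemma 2.1]{OO}: the series (\ref{formula:MT-MZF}) converges absolutely whenever $\sum_{\ell=1}^j \si_{k_\ell}+\si_{r+1}>j$ holds for all $1\leq k_1<\cdots<k_j\leq r$ and all $j=1,\dots,r$. Performing the substitution $\si_{k_\ell}\mapsto \si_{k_\ell}-\al_{k_\ell}-\eps$ shows that the majorizing series converges as soon as
\[
\sum_{\ell=1}^j \bigl(\si_{k_\ell}-\al_{k_\ell}\bigr)+\si_{r+1}>j+j\eps
\]
for every such $j$ and index set. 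The absolute convergence of (\ref{formula:MTML}) then follows from the comparison test.

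Finally I would dispose of the auxiliary $\eps$. The target region (\ref{abs. cnv. reg}) is cut out by the finitely many strict inequalities $\sum_{\ell=1}^j(\si_{k_\ell}-\al_{k_\ell})+\si_{r+1}>j$, one for each $j$ and each admissible index set $k_1<\cdots<k_j$. At any fixed point of (\ref{abs. cnv. reg}) each of these inequalities holds with a strictly positive slack, so I may choose $\eps>0$ small enough that $r\eps$ lies below the minimum of these finitely many slacks; for such $\eps$ every shifted inequality above is satisfied simultaneously, and the majorant converges.

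There is no genuine obstacle in this argument, as it is essentially a comparison against the known convergence region of (\ref{formula:MT-MZF}). The only point demanding mild care is the handling of the arbitrary $\eps$ supplied by axiom (\ref{axiom1}); this is harmless precisely because the inequalities defining (\ref{abs. cnv. reg}) are both strict and finite in number, so a single sufficiently small $\eps$ preserves all of them at once.
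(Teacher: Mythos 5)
Your proof is correct and follows essentially the same route as the paper, which likewise disposes of the lemma by combining the bound $a_k(n)\ll n^{\al_k+\eps}$ from axiom (\ref{axiom1}) with \cite[Lemma 2.1]{OO}; the only cosmetic difference is that you invoke the \emph{statement} of that lemma as a termwise majorant at the shifted real arguments $\si_{k_\ell}-\al_{k_\ell}-\eps$, whereas the paper speaks of rerunning its proof. Your explicit handling of the auxiliary $\eps$ --- exploiting that the region (\ref{abs. cnv. reg}) is cut out by finitely many strict inequalities, so one fixed $\eps$ with $r\eps$ below the minimal slack works uniformly --- is a detail the paper leaves implicit, and you have it right.
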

\begin{proof}
This is obtaned by $a_k(n) \ll n^{\al_k+\eps}$ and the same argument as in the proof of \cite[Lemma 2.1]{OO}.
\end{proof}

\begin{lem}
\lab{lem:convolution}
Let $s_j = \si_j +it_j$ for $1\leq  j \leq r+1$ and $b$ be a complex number. Then in the intersection of the regions defined by the inequalities
\begin{equation*}
\sum_{\ell=1}^j \left(\si_{k_\ell}\ - \al_{k_\ell} \right) + \si_{r+1} >j,
\end{equation*}
with $1 \leq k_1<k_2<\dots<k_j \leq r$ for any $j=1,2,\dots,r-1$ and
\begin{equation*}
\si_1+\si_2+\dots+\si_{r+1}>\max \left\{ r+ \sum_{k=1}^r \al_k, \re(b)+1 \right\} ,
\end{equation*}
we have
\begin{align*}
&\Ll_1 (s-b) \Ll_{MT,r}(s_1,\dots, s_r,s_{r+1};a_2,\dots,a_{r+1}) \\
&= \sum_{m_1,\dots,m_r = 1}^\infty \frac{\sum_{n \mid \gcd (m_1, \dots, m_r)} n^b a_2(\frac{m_1}{n}) \dots a_{r+1}(\frac{m_r}{n}) a_1(n)}{m_1^{s_1} \dots m_r^{s_r} (m_1+\dots+m_r)^{s_{r+1}}},
\end{align*}
where $s= s_1+\dots+s_r+s_{r+1}$.
\end{lem}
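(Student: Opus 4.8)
The plan is to prove the identity by expanding both factors as absolutely convergent Dirichlet series in the indicated region, multiplying them termwise, and then reorganizing the resulting multiple sum through a homogeneity substitution. First I would record that in the stated intersection of half-spaces both factors converge absolutely: the hypotheses for $j = 1, \dots, r-1$ together with the bound $\si_1 + \dots + \si_{r+1} > r + \sum_{k=1}^r \al_k$ furnish exactly the $j = r$ instance of (\ref{abs. cnv. reg}), so that the full set of inequalities required by the previous lemma is in force and $\Ll_{MT,r}(s_1,\dots,s_r,s_{r+1};a_2,\dots,a_{r+1})$ converges absolutely, while the remaining condition $\si_1 + \dots + \si_{r+1} > \re(b) + 1$ places the argument $s-b$ within the half-plane of absolute convergence of $\Ll_1$, so that $\Ll_1(s-b) = \sum_{n\geq 1} a_1(n)\, n^{b}\, n^{-s}$ converges absolutely. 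Since the product of two absolutely convergent series may be rearranged at will, I may rewrite the left-hand side as the unrestricted multiple sum over $(n, m_1, \dots, m_r)$ of
$$\frac{a_1(n)\, n^{b}\, a_2(m_1)\cdots a_{r+1}(m_r)}{n^{s}\, m_1^{s_1}\cdots m_r^{s_r}\,(m_1+\dots+m_r)^{s_{r+1}}}.$$

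The decisive step is to exploit the relation $s = s_1 + \dots + s_r + s_{r+1}$, which lets me distribute $n^{-s} = n^{-s_1}\cdots n^{-s_r}\, n^{-s_{r+1}}$ and absorb one copy of $n$ into each summation variable and into the linear form in the denominator, rewriting the general term as
$$\frac{a_1(n)\, n^{b}\, a_2(m_1)\cdots a_{r+1}(m_r)}{(n m_1)^{s_1}\cdots (n m_r)^{s_r}\,(n m_1 + \dots + n m_r)^{s_{r+1}}}.$$
I would then perform the substitution $M_j = n m_j$ for $1 \leq j \leq r$. Because $n$ divides every $M_j$, each contributing pair $(n, m_1,\dots,m_r)$ corresponds to a tuple $(M_1,\dots,M_r)$ together with a divisor $n \mid \gcd(M_1,\dots,M_r)$, and conversely every such choice arises exactly once via $m_j = M_j/n$; that is, the map $(n, m_1,\dots,m_r) \mapsto (M_1,\dots,M_r)$ has fibres indexed precisely by the divisors of $\gcd(M_1,\dots,M_r)$. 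Collecting the terms sharing a common tuple $(M_1,\dots,M_r)$ and renaming $M_j$ back to $m_j$ produces the inner divisor sum $\sum_{n \mid \gcd(m_1,\dots,m_r)} n^{b}\, a_1(n)\, a_2(m_1/n)\cdots a_{r+1}(m_r/n)$ over the denominator $m_1^{s_1}\cdots m_r^{s_r}(m_1+\dots+m_r)^{s_{r+1}}$, which is exactly the asserted right-hand side.

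The only genuine point requiring care, the main obstacle such as it is, is the justification of the rearrangement: verifying that the stated region really forces simultaneous absolute convergence of both factors, since only then are the interchange of summation and the regrouping by $\gcd$ legitimate. Once absolute convergence is in hand the manipulation is purely formal, so I would devote the bulk of the write-up to confirming that the two displayed families of inequalities amount to the conditions (\ref{abs. cnv. reg}) for $j = 1,\dots,r$ applied to the sequences $a_2,\dots,a_{r+1}$ together with the convergence of $\Ll_1(s-b)$, and to noting that absolute convergence of the product transfers automatically to the regrouped series, so that no separate convergence check for the right-hand side is needed.
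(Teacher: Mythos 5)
Your proof is correct and is essentially the paper's own argument: the paper disposes of this lemma in one line, calling it a direct generalization of \cite[Lemma 2.2]{OO}, and the proof behind that reference is exactly your procedure of expanding both factors as absolutely convergent series, distributing $n^{-s}=n^{-s_1}\cdots n^{-s_{r+1}}$ via $s=s_1+\dots+s_{r+1}$, substituting $M_j=nm_j$, and regrouping over divisors $n\mid\gcd(M_1,\dots,M_r)$. One caveat inherited from the lemma's statement rather than introduced by you: the condition $\si_1+\dots+\si_{r+1}>\re(b)+1$ guarantees absolute convergence of $\Ll_1(s-b)$ only when $\al_1=0$ (as in the paper's applications, where the convolving sequence is $1$ or $\overline{\chi_r}$); for a general sequence $a_1$ one would need $\si_1+\dots+\si_{r+1}>\re(b)+\al_1+1$.
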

\begin{proof}
This is a direct genelization of \cite[Lemma 2.2]{OO}.
\end{proof}

\begin{lem}
\lab{lem:analytic continuation of L-MT}
The function (\ref{formula:MTML}) can be continued meromorphically to the whole $\mathbb{C}^{r+1}$ space. Moreover if all $\Ll(s)$ are entire, then $\Ll_{MT,r}$ is entire. 
\end{lem}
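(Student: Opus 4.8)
The plan is to argue by induction on the depth $r$, using the Mellin--Barnes formula (\ref{formula:MBI}) to reduce the continuation of $\Ll_{MT,r}$ to that of the lower series $\Ll_{MT,r-1}$ together with the single Dirichlet series $\Ll_r$. For the base case $r=1$ one simply notes that $\Ll_{MT,1}(s_1,s_2;a_1)=\Ll_1(s_1+s_2)$, so axiom (\ref{axiom2}) gives meromorphic continuation to $\C^2$ with finitely many polar hyperplanes, and entirety when $\Ll_1$ is entire.

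For the inductive step I would first work in a subregion of the domain of absolute convergence (\ref{abs. cnv. reg}) in which additionally $\si_{r+1}>0$ and $\si_r>\al_r+1$. Writing $N:=m_1+\dots+m_{r-1}$ and splitting off $m_r$ via $(N+m_r)^{-s_{r+1}}=N^{-s_{r+1}}(1+m_r/N)^{-s_{r+1}}$, I apply (\ref{formula:MBI}) with $\la=m_r/N$ and $s=s_{r+1}$ along a contour $\re z=c$, $-\si_{r+1}<c<0$. Interchanging summation and integration (legitimate by absolute convergence) and then summing over $m_r$ and over $m_1,\dots,m_{r-1}$ separately produces the key recursion
\begin{align*}
&\Ll_{MT,r}(s_1,\dots,s_r,s_{r+1};a_1,\dots,a_r) \\
&= \frac{1}{2\pi i}\int_{(c)} \frac{\Gm(s_{r+1}+z)\Gm(-z)}{\Gm(s_{r+1})}\,\Ll_r(s_r-z)\,\Ll_{MT,r-1}(s_1,\dots,s_{r-1},s_{r+1}+z;a_1,\dots,a_{r-1})\,dz.
\end{align*}

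Next I would shift the contour to $\re z=M-\tfrac12$ for a large positive integer $M$. As $z$ moves to the right one crosses the poles $z=0,1,2,\dots,M-1$ of $\Gm(-z)$, the finitely many poles of $\Ll_r(s_r-z)$ supplied by axiom (\ref{axiom2}), and the poles in $z$ of $\Ll_{MT,r-1}(\dots,s_{r+1}+z,\dots)$, which by the inductive hypothesis lie on finitely many hyperplanes within any bounded $z$-region. The residue at $z=n$ is a constant multiple of $(s_{r+1})_n\,\Ll_r(s_r-n)\,\Ll_{MT,r-1}(s_1,\dots,s_{r-1},s_{r+1}+n;a_1,\dots,a_{r-1})$, where $(s_{r+1})_n=\Gm(s_{r+1}+n)/\Gm(s_{r+1})$ is a polynomial; together with the residues coming from $\Ll_r$ and $\Ll_{MT,r-1}$, all these terms are meromorphic in $(s_1,\dots,s_{r+1})$ by induction, while the shifted integral is holomorphic on an enlarged region. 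Letting $M\to\infty$ yields the continuation to all of $\C^{r+1}$. If every $\Ll_k$ is entire, then by induction so is $\Ll_{MT,r-1}$; hence $\Ll_r(s_r-z)$ and $\Ll_{MT,r-1}(\dots,s_{r+1}+z,\dots)$ contribute no $z$-poles, only the poles of $\Gm(-z)$ are crossed, each residue is a polynomial times entire functions, and therefore $\Ll_{MT,r}$ is entire.

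The main obstacle I anticipate is the analytic justification: the convergence of the shifted integral and the legitimacy of moving the contour past ever-growing families of poles. This rests on balancing the exponential decay $\Gm(s_{r+1}+z)\Gm(-z)\ll e^{-\pi|\im z|}$ coming from Stirling's formula against the polynomial vertical growth of $\Ll_r(s_r-z)$ granted by axiom (\ref{axiom3}), together with a corresponding vertical-growth bound for $\Ll_{MT,r-1}$ that must be propagated through the induction. I would therefore record such a strip bound for $\Ll_{MT,r-1}$ alongside its continuation, arguing as in \cite{OO} (generalizing Matsumoto's method \cite{Ma03}).
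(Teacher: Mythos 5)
Your proposal is correct and takes essentially the same route as the paper: the paper applies the Mellin--Barnes formula (\ref{formula:MBI}) to derive exactly your recursion expressing $\Ll_{MT,r}$ as a contour integral of $\Ll_{MT,r-1}(s_1,\dots,s_{r-1},s_{r+1}+z;a_1,\dots,a_{r-1})\,\Ll_r(s_r-z)$ and then shifts the path rightward from $(c)$ to $(M-\eps)$ for arbitrary $M$, deferring the remaining details to the argument of Matsumoto and Tanigawa \cite{MaTa03}. The only inessential deviation is your concern about crossing poles of $\Ll_{MT,r-1}$ and propagating a vertical-growth bound for it through the induction: since the shift is to the \emph{right}, the argument $s_{r+1}+z$ moves deeper into the region (\ref{abs. cnv. reg}) of absolute convergence, so $\Ll_{MT,r-1}$ contributes no poles in $z$ and is uniformly bounded on the shifted contour, and only the order estimate of axiom (\ref{axiom3}) for $\Ll_r$ is needed against the exponential decay of the Gamma factors.
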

\begin{proof}
This can be proved by the same argument as in the proof of Theorem 1 and Theorem 2 in \cite{MaTa03}. So we omit the details. By applying (\ref{formula:MBI}), we can find that 
\begin{align*}
&\Ll_{MT,r}(s_1,\dots,s_r, s_{r+1}; a_1,\dots,a_r) \\
&= \sum_{m_1, \dots,m_r=1}^\infty \frac{a_1(m_1) \dots a_r(m_r)}{m_1^{s_1} \dots m_r^{s_r} \left((m_1 + \dots +m_{r-1}) \left(1+ \frac{m_r}{m_1 + \dots +m_{r-1}}\right) \right)^{s_{r+1}}} \\
&= \frac{1}{2\pi i} \int_{(c)} \frac{\Gm(s_{r+1}+z) \Gm(-z)}{\Gm(s_{r+1})} \Ll_{MT,r-1} (s_1, \dots, s_{r-1},s_{r+1}+z; a_1,\dots,a_{r-1}) \\
&\quad \times \Ll_r(s_r-z) dz, 
\end{align*}
where $\max_{1 \leq j \leq r-1} \{ -\si_r, -\si_r-\si_{r+1}-\sum_{\ell =1}^j (\si_{k_\ell}-\al_{k_\ell}) -j \}<c<-\al_r$ with $1 \leq k_1<k_2<\dots<k_j \leq r-1$. We can choose such $c \in \mathbb{R}$ by Lemma \ref{abs. cnv. reg}. Then we shift the path of integration to the right from $(c)$ to $(M-\eps)$ where $M$ is an arbitrary non-negative integer and $\eps$ is a small positive number, we obtain the meromorphic continuation of (\ref{formula:MTML}) to whole $\mathbb{C}^{r+1}$ space.
\end{proof}

\begin{lem}
\lab{lem:L-MT}
Let $(s_1,\dots, s_r,s_{r+1}) \in \mathbb{C}^{r+1}$ be in the region (\ref{abs. cnv. reg}). Moreover we assume that $\si_r<\al_r, \si_{r+1}>0$. 
\begin{enumerate}[(1)]
\item\lab{lem:L-MT1} If $a_r=1$, then we have
\begin{align}
\lab{formula:L-MT1}
&\Ll_{MT,r}(s_1,\dots,s_r, s_{r+1}; a_1,\dots,a_{r-1},1) \\
&= \frac{\Gm(1-s_r) \Gm(s_r+s_{r+1}-1)}{\Gm(s_{r+1})} \Ll_{MT,r-1} (s_1, \dots,s_{r-1},s_r+s_{r+1}-1; a_1, \dots, a_{r-1}) \nonumber \\
&+ \left( 2\pi \right)^{s_r+s_{r+1}-1} \Gm(1-s_r) \left\{ e^{\pi i \frac{s_r+s_{r+1}-1}{2}} F_1^+ (s_1,\dots,s_{r+1} ; a_1, \dots, a_{r-1},1) \right. \nonumber \\
&\quad \left. + e^{-\pi i \frac{s_r+s_{r+1}-1}{2}} F_1^- (s_1,\dots,s_{r+1} ; a_1, \dots, a_{r-1},1) \right\}. \nonumber 
\end{align}

\item\lab{lem:L-MT2} If $a_r$ is the primitive Dirichlet character $\chi_r$, then we have
\begin{align}
\lab{formula:L-MT2}
&\Ll_{MT,r}(s_1,\dots,s_r, s_{r+1}; a_1,\dots,a_{r-1}, \chi_r) \\
&= q^{-\frac{1}{2}} \eps(\chi_r) \left(\frac{2\pi}{q} \right)^{s_r+s_{r+1}-1} \Gm(1-s_r) \nonumber \\
&\quad \times \left\{ e^{\pi i \frac{s_r+s_{r+1}+\kappa_r-1}{2}} F_1^+ (s_1,\dots,s_{r+1} ; a_1, \dots, a_{r-1},\overline{\chi_r}) \right. \nonumber \\
&\qquad \left.+ e^{-\pi i \frac{s_r+s_{r+1}+\kappa_r-1}{2}} F_q^- (s_1,\dots,s_{r+1} ; a_1, \dots, a_{r-1}, \overline{\chi_r}) \right\}. \nonumber
\end{align}
\end{enumerate}
\end{lem}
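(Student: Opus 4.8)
The plan is to run the Mellin--Barnes machinery of Lemma~\ref{lem:analytic continuation of L-MT} against the functional equation of $\Ll_r$, in the spirit of Kiuchi--Tanigawa--Zhai. I would start from the representation
\[
\Ll_{MT,r}(s_1,\dots,s_{r+1};a_1,\dots,a_{r-1},a_r)=\frac{1}{2\pi i}\int_{(c)}\frac{\Gm(s_{r+1}+z)\Gm(-z)}{\Gm(s_{r+1})}\,\Ll_{MT,r-1}(s_1,\dots,s_{r-1},s_{r+1}+z;a_1,\dots,a_{r-1})\,\Ll_r(s_r-z)\,dz,
\]
on a contour with $-\si_{r+1}<c<\si_r-1$ (and $c>\si_r-2$, so that later only one pole is crossed); here $\Ll_r=\zeta$ in case~(\ref{lem:L-MT1}) and $\Ll_r=L(\,\cdot\,,\chi_r)$ in case~(\ref{lem:L-MT2}). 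Inserting the asymmetric functional equation of $\Ll_r$ replaces $\Ll_r(s_r-z)$ by $\Ll_r^{\vee}(1-s_r+z)$ --- where $\Ll_r^{\vee}=\zeta$, respectively $L(\,\cdot\,,\overline{\chi_r})$ --- times $\Gm(1-s_r+z)$, an elementary power factor ($2^{\,s_r-z}\pi^{\,s_r-z-1}$ for $\zeta$, with $2\pi$ replaced by $2\pi/q$ and an extra $q^{-1/2}\eps(\chi_r)$ for $\chi_r$), and a factor that I would split as $\frac1{2i}(e^{\pi i(s_r-z)/2}-e^{-\pi i(s_r-z)/2})$ for $\zeta$, respectively $e^{-\pi i(s_r-z)/2}+\chi_r(-1)e^{\pi i(s_r-z)/2}$ for $\chi_r$; the two exponentials produce the $F^{+}$ and $F^{-}$ terms and, in the character case, the parity $\kappa_r$ shifts the phase to $e^{\pm\pi i(s_r+s_{r+1}+\kappa_r-1)/2}$.

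Next I would combine the two Dirichlet series in the integrand. Expanding $\Ll_r^{\vee}(1-s_r+z)=\sum_k a_r^{\vee}(k)\,k^{s_r-1}k^{-z}$ and applying the convolution identity of Lemma~\ref{lem:convolution} with $b=s_1+\dots+s_r+s_{r+1}-1$, the product $\Ll_r^{\vee}(1-s_r+z)\,\Ll_{MT,r-1}(s_1,\dots,s_{r-1},s_{r+1}+z;a_1,\dots,a_{r-1})$ becomes a single $(r-1)$-fold series in the variables $\ell_j=km_j$. Its decisive feature is that the whole $z$-dependence collapses onto the factor $(\ell_1+\dots+\ell_{r-1})^{-(s_{r+1}+z)}$, while the emerging coefficient is exactly the convolution $\sum_{n\mid\gcd(\ell_1,\dots,\ell_{r-1})}n^{\,s_1+\dots+s_{r+1}-1}a_1(\ell_1/n)\cdots a_{r-1}(\ell_{r-1}/n)\,a_r^{\vee}(n)$ appearing in $F_x^{\pm}$. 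Interchanging the $\ell$-sum with the integral and, for fixed $L=\ell_1+\dots+\ell_{r-1}$, substituting $w=-s_{r+1}-z$, the three Gamma factors turn into $\Gm(-w)\Gm(s_{r+1}+w)\Gm(1-s_r-s_{r+1}-w)$ and the power into $(\pm2\pi iL/x)^{w}$, so the inner integral is precisely representation~(\ref{Psi-MBI}) of $\Gm(1-s_r)\,\Psi(s_{r+1},s_r+s_{r+1};\pm2\pi iL/x)$ with $x=1$, respectively $x=q$. A short count of the surviving constants --- the power factor, the $\frac1{2i}$, the $(\pm2\pi i)^{s_{r+1}}$ thrown off by the substitution, and the cancelling powers of $L$ --- assembles the prefactor $(2\pi)^{s_r+s_{r+1}-1}\Gm(1-s_r)e^{\pm\pi i(s_r+s_{r+1}+\kappa_r-1)/2}$ (with $\kappa_r=0$ in case~(\ref{lem:L-MT1})), producing the $F_x^{\pm}$ terms of (\ref{formula:L-MT1}) and (\ref{formula:L-MT2}).

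It remains to account for the extra $\Ll_{MT,r-1}$ summand in (\ref{formula:L-MT1}). The dual series $\Ll_r^{\vee}(1-s_r+z)$ converges only for $\re z>\si_r$, to the right of the starting contour, so to reach it I must shift $(c)$ rightward across the unique pole at $z=s_r-1$. In case~(\ref{lem:L-MT1}) this is the pole of $\Gm(1-s_r+z)$ (equivalently, the pole of $\zeta(s_r-z)$), with residue $-\frac{\Gm(1-s_r)\Gm(s_r+s_{r+1}-1)}{\Gm(s_{r+1})}\Ll_{MT,r-1}(s_1,\dots,s_{r-1},s_r+s_{r+1}-1;a_1,\dots,a_{r-1})$, using $\zeta(0)=-\tfrac12$ and $\sin\frac\pi2=1$; since a rightward shift contributes minus this residue, it reproduces exactly the $\Ll_{MT,r-1}$ term of (\ref{formula:L-MT1}). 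In case~(\ref{lem:L-MT2}) the dual $L(\,\cdot\,,\overline{\chi_r})$ is entire, no pole is crossed, and hence no modified term appears, which is why (\ref{formula:L-MT2}) and Theorem~\ref{thm:FEforMTLF2} carry no subtraction.

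The chief obstacle is the analytic justification rather than the formal identity. I would need to (i) position $(c)$ so as to meet simultaneously the convergence requirement of the starting representation, the lower bound $-\si_{r+1}$ coming from (\ref{formula:MBI}), and the hypotheses $\si_r<\al_r$, $\si_{r+1}>0$, crossing the pole at $z=s_r-1$ and no other (using that (\ref{abs. cnv. reg}) forces the $(r-1)$-fold series to converge on the shifted contour); and (ii) legitimise each interchange of summation and integration and the contour shift. Both rest on uniform absolute convergence along vertical lines: by Stirling the three Gamma factors decay like $e^{-3\pi|t|/2}$ and dominate the $e^{\pi|t|/2}$ growth of the trigonometric factor, leaving net decay $e^{-\pi|t|}$, which combines with the polynomial bounds on $\Ll_1,\dots,\Ll_{r-1}$ from axiom~(\ref{axiom3}) and the convexity bound for $\zeta$ or $L(\,\cdot\,,\chi_r)$. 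Finally, the convergence of $F_x^{\pm}$ follows from $\Psi(s_{r+1},s_r+s_{r+1};\pm2\pi iL/x)\ll L^{-\si_{r+1}}$ checked against (\ref{abs. cnv. reg}).
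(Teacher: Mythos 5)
Your proposal is correct and is essentially the paper's own proof run in mirror image: the paper writes $(m_1+\dots+m_r)^{-s_{r+1}}=m_r^{-s_{r+1}}\bigl(1+\tfrac{m_1+\dots+m_{r-1}}{m_r}\bigr)^{-s_{r+1}}$ instead, so that (\ref{formula:MBI}) yields $\Ll_{MT,r-1}(s_1,\dots,s_{r-1},-z;a_1,\dots,a_{r-1})\,\Ll_r(s_r+s_{r+1}+z)$ in the integrand, shifts the contour \emph{left} across the single $\zeta$-pole at $z=1-s_r-s_{r+1}$ (producing the same modified $\Ll_{MT,r-1}$ term, and no pole in the character case), inserts the same asymmetric functional equations (\ref{FE-zeta}) and (\ref{FE-L}), applies Lemma \ref{lem:convolution} with $b=s_1+\dots+s_r+s_{r+1}-1$, and reads off (\ref{Psi-MBI}) directly without any change of variables --- your rightward shift across $z=s_r-1$ followed by the substitution $w=-s_{r+1}-z$ is precisely this argument conjugated by $z\mapsto -s_{r+1}-z$, with all residue signs, contour windows, and convergence justifications handled correctly. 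The one blemish is your written trigonometric factor in the character case, which should be $e^{-\pi i s/2}+\chi_r(-1)e^{\pi i s/2}$ in the dual variable $s=1-s_r+z$ (equivalently $\cos\tfrac{\pi(1-s_r+z-\kappa_r)}{2}$, as in (\ref{FE-L})) rather than in $s_r-z$; this is only a constant-phase slip, and the final phases $e^{\pm\pi i(s_r+s_{r+1}+\kappa_r-1)/2}$ you assert are the correct ones.
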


\begin{proof}
Let $(s_1,\dots, s_r,s_{r+1}) \in \mathbb{C}^{r+1}$ be in the region (\ref{abs. cnv. reg}). Then we apply (\ref{formula:MBI}) to the expression (\ref{formula:L_MT}) to obtain, 
\begin{align}
&\Ll_{MT,r}(s_1,\dots,s_r, s_{r+1}; a_1,\dots,a_r) \nonumber \\
&= \sum_{m_1, \dots,m_r=1}^\infty \frac{a_1(m_1) \dots a_r(m_r)}{m_1^{s_1} \dots m_r^{s_r} \left( m_r \left(1+ \frac{m_1 + \dots +m_{r-1}}{m_r} \right) \right)^{s_{r+1}}} \nonumber \\
&= \frac{1}{2\pi i} \int_{(c)} \frac{\Gm(s_{r+1}+z) \Gm(-z)}{\Gm(s_{r+1})} \Ll_{MT,r-1} (s_1, \dots, s_{r-1},-z; a_1,\dots,a_{r-1}) \nonumber \\
&\quad \times \Ll_r(s_r+s_{r+1}+z) dz, \nonumber 
\end{align}
where $\max \{ \al_r-\si_r-\si_{r+1}+1, -\si_{r+1} \}<c <\min_{1 \leq j \leq r-1} \{ 0, \sum_{\ell =1}^j (\si_{k_\ell}-\al_{k_\ell}) -j \}$ with $1 \leq k_1<k_2<\dots<k_j \leq r-1$. Assume that $\si_r<\al_r$, then $-\si_{r+1}<\al_r-\si_r-\si_{r+1}<\al_r-\si_r-\si_{r+1}+1$. Take a real number $\eta$ such that $-\si_{r+1}<\eta<\al_r-\si_r-\si_{r+1}$, and move the path of integration to the left from $(c)$ to $(\eta)$. 

(\ref{lem:L-MT1}) If $\Ll_r$ is the Riemann zeta-function, then $\Ll_r(s_r+s_{r+1}+z)$ has a pole at $s=1-s_r-s_{r+1}$ wtih residue $1$. So we obtain
\begin{align*}
&\Ll_{MT,r}(s_1,\dots,s_r, s_{r+1}; a_1,\dots,a_{r-1},1) \\
&= \frac{\Gm(1-s_r) \Gm(s_r+s_{r+1}-1)}{\Gm(s_{r+1})} \Ll_{MT,r-1} (s_1, \dots,s_{r-1},s_r+s_{r+1}-1; a_1, \dots, a_{r-1}) \\
&\quad +\frac{1}{2\pi i} \int_{(\eta)} \frac{\Gm(s_{r+1}+z) \Gm(-z)}{\Gm(s_{r+1})} \Ll_{MT,r-1} (s_1, \dots, s_{r-1},-z; a_1,\dots,a_{r-1}) \\
&\quad \times \Ll_r(s_r+s_{r+1}+z) dz.
\end{align*}
Substituting the functional equation for the Riemann zeta-function (for example \cite[Theorem 12.7]{Apo}):
\begin{equation}
\lab{FE-zeta}
\zeta(1-s) = \zeta(s) 2^{1-s}\pi^{-s}\Gm(s) \cos \frac{\pi s}{2}
\end{equation}
into the above, we have
\begin{align*}
&\Ll_{MT,r}(s_1,\dots,s_r, s_{r+1}; a_1,\dots,a_{r-1},1) \nonumber \\
&= \frac{\Gm(1-s_r) \Gm(s_r+s_{r+1}-1)}{\Gm(s_{r+1})} \Ll_{MT,r-1} (s_1, \dots,s_{r-1},s_r+s_{r+1}-1; a_1, \dots, a_{r-1}) \\
&\quad + \frac{1}{2\pi i} \int_{(\eta)} \frac{\Gm(s_{r+1}+z) \Gm(-z)}{\Gm(s_{r+1})} \Ll_{MT,r-1} (s_1, \dots, s_{r-1},-z; a_1,\dots,a_{r-1}) \nonumber \\
&\quad \times \frac{(2\pi)^{s_r+s_{r+1}+z}}{\pi} \Gm(1-s_r-s_{r+1}-z) \zeta(1-s_r-s_{r+1}-z) \nonumber \\
&\quad \times \cos \frac{\pi (1-s_r-s_{r+1}-z)}{2} dz. \nonumber 
\end{align*}
We apply Lemma \ref{lem:convolution} with $b= s_1+\dots+s_r+s_{r+1}-1$ to find that the above is equal to
\begin{align*}
&\frac{\Gm(1-s_r) \Gm(s_r+s_{r+1}-1)}{\Gm(s_{r+1})} \Ll_{MT,r-1} (s_1, \dots,s_{r-1},s_r+s_{r+1}-1; a_1, \dots, a_{r-1}) \\
&+ \left( 2\pi \right)^{s_r+s_{r+1}-1} \nonumber \sum_{\ell_1,\dots,\ell_{r-1} \geq 1} \frac{\sum_{n \mid \gcd(\ell_1,\dots,\ell_{r-1})} n^{s_1+\dots+s_r+s_{r+1}-1} a_1(\frac{\ell_1}{n}) \dots a_{r-1}(\frac{\ell_{r-1}}{n})}{\ell_1^{s_1} \dots \ell_{r-1}^{s_{r-1}}} \nonumber \\
&\quad \times \frac{1}{2 \pi i} \int_{(\eta)} \frac{\Gm(s_{r+1}+z) \Gm(-z)}{\Gm(s_{r+1})} \Gm(1-s_r-s_{r+1}-z) \nonumber \\
&\quad \times \left( e^{\pi i \left( \frac{s_r+s_{r+1}-1}{2}+\frac{z}{2} \right)}+e^{-\pi i \left( \frac{s_r+s_{r+1}-1}{2}+\frac{z}{2} \right)}\right) \left( 2\pi (\ell_1+\dots +\ell_{r-1})\right)^z dz \nonumber \\
&=\left( 2\pi \right)^{s_r+s_{r+1}-1} \Gm(1-s_r) \\
&\quad \times \sum_{\ell_1,\dots,\ell_{r-1} \geq 1} \frac{\sum_{n \mid \gcd(\ell_1,\dots,\ell_{r-1})} n^{s_1+\dots+s_r+s_{r+1}-1} a_1(\frac{\ell_1}{n}) \dots a_{r-1}(\frac{\ell_{r-1}}{n})}{\ell_1^{s_1} \dots \ell_{r-1}^{s_{r-1}}} \nonumber \\
&\quad \times \left\{ e^{\pi i \frac{s_r+s_{r+1}-1}{2}} \Psi(s_{r+1}, s_r+s_{r+1}; 2\pi i(\ell_1+\dots+\ell_{r-1})) \right. \nonumber \\
&\qquad + \left. e^{-\pi i \frac{s_r+s_{r+1}-1}{2}} \Psi(s_{r+1}, s_r+s_{r+1}; -2\pi i(\ell_1+\dots+\ell_{r-1})) \right\}, \nonumber
\end{align*}
where the last equality comes from (\ref{Psi-MBI}).

(\ref{lem:L-MT2}) If $\Ll_r$ is the Dirichlet $L$-function attached to the primitive character $\chi_r \mod q$, then there is no relevant pole, since $L(s, \chi_r)$ is entire. Instead of substituting (\ref{FE-zeta}), we apply the functional equation for the Dirichlet $L$-function (see \cite[Exercise 12.8]{Apo})
\begin{equation}
\lab{FE-L}
L(1-s, \chi) = \eps(\chi) L(s,\overline{\chi}) 2^{1-s}\pi^{-s}q^{s-\frac{1}{2}} \Gm(s) \cos \frac{\pi (s-\kappa)}{2}
\end{equation}
into the above, we have
\begin{align}
&\Ll_{MT,r}(s_1,\dots,s_r, s_{r+1}; a_1,\dots,a_{r-1}, \chi_r) \nonumber \\
&= \frac{1}{2\pi i} \int_{(\eta)} \frac{\Gm(s_{r+1}+z) \Gm(-z)}{\Gm(s_{r+1})} \Ll_{MT,r-1} (s_1, \dots, s_{r-1},-z; a_1,\dots,a_{r-1}) \nonumber \\
&\quad \times \eps(\chi_r) \frac{(2\pi)^{s_r+s_{r+1}+z}}{\pi} q^{\frac{1}{2}-s_r-s_{r+1}-z} \Gm(1-s_r-s_{r+1}-z) L(1-s_r-s_{r+1}-z,\overline{\chi_r}) \nonumber \\
&\quad \times \cos \frac{\pi (1-s_r-s_{r+1}-z-\kappa_r)}{2} dz. \nonumber 
\end{align}
The remaining argument is the same as in (\ref{lem:L-MT1}).
\end{proof}

\section{Proof of Main Theorems}\label{sec4}
In this section, we complete the proof of Theorem \ref{thm:FEforMTLF1} and Theorem \ref{thm:FEforMTLF2}.
\begin{proof}[Proof of Theorem \ref{thm:FEforMTLF1} and Theorem \ref{thm:FEforMTLF2}]
In order to complete the main results, it suffices to show that $F_x^\pm (s_1,\dots,s_r,s_{r+1};a_1,\dots,a_{r-1},a_r)$ can be continued meromorphically to the whole $\mathbb{C}^{r+1}$ space. The proof is similar to that of \cite[Theorem 3.4]{OO}. By applying the well-known formula \cite[6.5 (6)]{ErMaObTr}
\begin{equation}
\lab{psi-reflection}
\Psi(a,c;x)= x^{1-c} \Psi(a-c+1,2-c;x),
\end{equation}
we obtain
\begin{align}
&F_x^\pm (s_1,\dots,s_r,s_{r+1};a_1,\dots,a_{r-1},a_r) \\
&=\left( \pm \frac{2\pi i}{x} \right)^{1-s_r-s_{r+1}} \nonumber \\
&\quad \times \sum_{\ell_1,\dots,\ell_{r-1} \geq 1} \frac{\sum_{n \mid \gcd(\ell_1,\dots,\ell_{r-1})} n^{s_1+\dots+s_r+s_{r+1}-1} a_1(\frac{\ell_1}{n}) \dots a_{r-1}(\frac{\ell_{r-1}}{n}) a_r(n)}{\ell_1^{s_1} \dots \ell_{r-1}^{s_{r-1}}(\ell_1+\dots+\ell_{r-1})^{s_r+s_{r+1}-1}} \nonumber \\
&\quad \times \Psi(1-s_r, 2-s_r-s_{r+1}; \pm 2\pi i(\ell_1+\dots+\ell_{r-1})/x) . \nonumber
\end{align}
Also, we use the asymptotic expansion (see \cite[6.13.1 (1)]{ErMaObTr})
\begin{equation*}
\Psi(a,c;x) = \sum_{k=0}^{N-1} \frac{(-1)^k (a)_k (a-c+1)_k}{k!}x^{-a-k} + \rho_N (a,c;x),
\end{equation*}
where $N$ is an arbitrary non-negative integer, $(a)_k = \Gm(a+k)/\Gm(a)$ and $\rho_N (a,c;x)$ is the remaider term. Then by Lemma \ref{lem:convolution}, we obtain 
\begin{align}
\lab{formula:expanssion of F_x^pm}
&F_x^\pm (s_1,\dots,s_r,s_{r+1};a_1,\dots,a_{r-1},a_r) \\
&=\sum_{k=0}^{N-1} \frac{(-1)^k (1-s_r)_k (s_{r+1})_k}{ (2\pi/x)^{s_{r+1}+k} k!} \Ll_r(1-s_r+k) \Ll_{MT, r-1} (s_1,\dots,s_{r-1}, s_{r+1}+k; a_1,\dots, a_{r-1}) \nonumber \\
&\quad + \left( \pm \frac{2\pi i}{x} \right)^{1-s_r-s_{r+1}} \sum_{\ell_1,\dots,\ell_{r-1} \geq 1} \frac{\sum_{n \mid \gcd(\ell_1,\dots,\ell_{r-1})} n^{s_1+\dots+s_r+s_{r+1}-1} a_1(\frac{\ell_1}{n}) \dots a_{r-1}(\frac{\ell_{r-1}}{n}) a_r(n)}{\ell_1^{s_1} \dots \ell_{r-1}^{s_{r-1}}(\ell_1+\dots+\ell_{r-1})^{s_r+s_{r+1}-1}} \nonumber \\
&\quad \times \rho_N(1-s_r, 2-s_r-s_{r+1}; \pm 2\pi i(\ell_1+\dots+\ell_{r-1})/x). \nonumber 
\end{align}
The first term on the right hand side of (\ref{formula:expanssion of F_x^pm}) is continued meromorphically to the whole $\mathbb{C}^{r+1}$ space by the assumption (\ref{axiom2}) and Lemma \ref{lem:analytic continuation of L-MT}.

For the second term, by applying the estimate (\cite[(6.2)]{Ma98})
\begin{align*}
&\lvert \rho_N (1-s_r, 2-s_r-s_{r+1}; \pm 2\pi i(\ell_1+\dots+\ell_{r-1})/x) \rvert \\
& \ll \frac{\lvert (s_{r+1})_k \rvert \Gm(-\si_r +N+1)}{N! \lvert \Gm(1-s_r) \rvert} e^{\pi( \lvert t_r \rvert + \lvert t_{r+1} \rvert)/2} (2\pi (\ell_1+\dots+\ell_{r-1})/x)^{\si_r-N-1},
\end{align*}
where $\si_r<N+1$ and $\si_{r+1}\geq -N$, the second term can be estimated as
\begin{align*}
&\ll \frac{\lvert (s_{r+1})_k \rvert \Gm(-\si_r +N+1)}{N! \lvert \Gm(1-s_r) \rvert} e^{\pi( \lvert t_r \rvert + \lvert t_{r+1} \rvert)} \\
&\quad \times \Ll_{MT,r-1}(\si_1,\dots,\si_{r-1},\si_{r+1}+N; \lvert a_1 \rvert, \dots, \lvert a_{r-1} \rvert) \Ll_r (1-\si_r+N, \lvert a_r \rvert).
\end{align*}
By Lemma \ref{abs. cnv. reg} and the assumption (\ref{axiom1}), we see that the second term on the right hand side of (\ref{formula:expanssion of F_x^pm}) is convergent absolutely when $\si_r<N-\al_r$ and 
\begin{equation*}
\sum_{\ell=1}^j \left(\si_{k_\ell} - \al_{k_\ell} \right) + \si_{r+1} >j-N,
\end{equation*}
with $1 \leq k_1<k_2<\dots<k_j \leq r-1$ for any $j=1,2,\dots,r-1$. Since $N$ is arbitrary, $F_x^\pm (s_1,\dots,s_r,s_{r+1};a_1,\dots,a_{r-1},a_r)$ can be continued meromorphically to the whole $\mathbb{C}^{r+1}$ space. Therefore we find that (\ref{formula:FEforMTLF1}) and (\ref{formula:FEforMTLF2}) hold on the whole $\mathbb{C}^{r+1}$ space except for singularity points.
\end{proof}

\section{Special cases}\label{sec5}
In the case of double $L$-functions, our results can be applied to obtain functional equations. As the first application, we invoke the double series considered by Choie and Matsumoto \cite{CM16}. They studied the double series
\begin{equation*}
L_2 (s_1,s_2;\mathfrak{A}) = \sum_{m,n \geq 1} \frac{a(n)}{m^{s_1} (m+n)^{s_2}},
\end{equation*}
where $\mathfrak{A}=\{a(n) \}_{n\geq1}$ is acomplex sequence satisfying (i) $a(n) \ll n^{\frac{\kappa-1}{2}+\eps}$ for a certain constant $\kappa \geq 1$ and arbitrary small positive number $\eps$, (ii) the Dirichlet series $L(s,\mathfrak{A})= \sum_{n \geq 1} a(n) n^{-s}$ can be continued meromorphically to the complex plane and has only finitely many poles. On these assumptions, they showed in \cite[Thorem 2.1]{CM16} that the function
\begin{align*}
F_\pm (s_1,s_2;\mathfrak{A}) &= \sum_{\ell \geq 1} \sum_{n \mid \ell} n^{s_1+s_2-1} a(n) \Psi(s_2,s_1+s_2;\pm 2\pi i  \ell)
\end{align*}
can be continued meromorphically to the $\mathbb{C}^2$ and except for singularity points, it holds 
\begin{align*}
L_2 (s_1,s_2;\mathfrak{A}) &= \frac{\Gm(1-s_1) \Gm(s_1+s_2-1)}{\Gm(s_2)} L (s_1+s_2-1;\mathfrak{A})\\
&\quad+ \Gm(1-s_1) \left\{ F_+ (1-s_2,1-s_1;\mathfrak{A})+F_- (1-s_2,1-s_1;\mathfrak{A}) \right\}.
\end{align*}
We put $\Ll_1 = L(s,\mathfrak{A})$, then $\Ll_1$ satisfies the axioms (\ref{axiom1}) and (\ref{axiom2}). Since $F_1^\pm (0,s_1,s_2; a,1) = F_\pm (s_1,s_2;\mathfrak{A})$ when $r=2$ and $\sum_{n \mid \ell} n^c a(\frac{\ell}{n}) = \ell^c \sum_{n \mid \ell} n^{-c} a(n)$, we see that Theorem \ref{thm:FEforMTLF1} includes \cite[Thorem 2.1]{CM16} since $\Ll_{MT,2}(0,s_1,s_2;a,1)=L_2 (s_1,s_2;\mathfrak{A})$.

Thoerem \ref{thm:FEforMTLF2} also has an application to different double series. Komori, Matsumoto and Tsumura \cite{KMT11} considered the following double $L$-function
\begin{equation*}
L_2 (s_1,s_2;\chi_1,\chi_2) = \sum_{m=1}^\infty \sum_{n=1}^\infty \frac{\chi_1(m) \chi_2(n)}{m^{s_1} (m+n)^{s_2}},
\end{equation*}
where $\chi_1,\chi_2$ are primitive Dirichlet characters modulo $q>1$. In \cite[Corollary 2.3]{KMT11}, they showed that
\begin{equation*}
\left( \frac{2\pi i}{q} \right)^{\frac{1-s_1-s_2}{2}} \frac{\Gm(s_2)}{\tau(\chi_1)} L_2 (s_1,s_2;\chi_1,\chi_2) =\left( \frac{2\pi i}{q} \right)^{\frac{s_1+s_2-1}{2}} \frac{\Gm(1-s_1)}{\tau(\overline{\chi_1})} L_2 (1-s_2,1-s_1;\overline{\chi_2},\overline{\chi_1})
\end{equation*}
holds on the hyperplane $s_1+s_2=2k+1 (k \in \Z)$ if $\chi_1(-1)\chi_2(-1) =1$, and on the hyperplane $s_1+s_2=2k$ ($k \in \mathbb{Z}$) if $\chi_1(-1)\chi_2(-1)=-1$.

Since $\Ll_{MT,2}(0,s_1,s_2;\chi_2,\chi_1)=L_2 (s_1,s_2;\chi_1,\chi_2)$ for primitive characters $\chi_1,\chi_2$ mod $q>1$, we can find that Theorem \ref{thm:FEforMTLF2} is a generalization of \cite[Corollary 2.3]{KMT11}.

\begin{ack} 
The author would like to thank Professor Kohji Matsumoto for valuable comments. Also the auther would like to thank Dr. Sh\={o}ta Inoue for his comments on an earlier version of this article.
\end{ack} 

\bibliographystyle{plain}

\end{document}